\newtheorem{theorem}{Theorem}
\newtheorem{lemma}[theorem]{Lemma}
\newtheorem{Lemma A.1}{Lemma A.1}
\theoremstyle{definition}
\theoremstyle{remark}
\def\be{\begin{equation}}
\def\ee{\end{equation}}
\begin{document}
\title{ A Liouville Theorem for Axi-symmetric Navier-Stokes Equations on $\mathbb{R}^2 \times \mathbb{T}^1$ }

\author{Zhen Lei \footnotemark[1]
\and Xiao Ren   \footnotemark[1]
\and Qi S. Zhang    \footnotemark[2]
}
\renewcommand{\thefootnote}{\fnsymbol{footnote}}
\footnotetext[1]{
School of Mathematical Sciences; LMNS and Shanghai Key Laboratory for Contemporary Applied Mathematics,
Fudan University, Shanghai 200433, P. R.China.}
\footnotetext[2]{Department of
Mathematics,  University of California, Riverside, CA 92521, USA.}

\date{\today}

\maketitle

\begin{abstract}
We establish a Liouville theorem for bounded mild ancient solutions to the axi-symmetric incompressible Navier-Stokes equations on $(-\infty, 0] \times (\mathbb{R}^2 \times \mathbb{T}^1)$. This is a step forward to completely solve the conjecture  on $(-\infty, 0] \times \mathbb{R}^3$  which was made in \cite{KNSS} to describe the potential singularity structures of the Cauchy problem.
\end{abstract}
{\bf Keywords:} Navier-Stokes equations, ancient solution, axi-symmetric, Liouville theorem

\section{Introduction}
In the analysis of many physical or geometric PDEs, one often applies the standard blow-up procedure to understand the local structures of potential singularities. Such a procedure for parabolic PDEs naturally produces limit solutions with certain \textit{a priori} estimates, which exist on the half space-time domain $(- \infty, 0] \times \mathbb{R}^n$ and are called to be ancient. Liouville properties of these ancient solutions play important roles in the study of singularity structures of PDEs.

In this article, we establish a Liouville theorem for ancient solutions of three-dimensional incompressible Navier-Stokes equations. The system of equations are
\begin{equation} \label{eq-NS}
\begin{cases}
\partial_t\mathbf{v} + (\mathbf{v} \cdot \nabla) \mathbf{v}  + \nabla p = \Delta \mathbf{v}, \\
\text{div}\,\, \mathbf{v} = 0.\\
\end{cases}
\end{equation}
Here $\mathbf{v}$ is the velocity vector and $p$  the pressure. They are the fundamental equations describing the motion of viscous fluid substances and are believed to  describe turbulence properly\cite{LT}. Whether singularities can develop in finite time from smooth initial data has been called  one of the seven most important open problems in mathematics by the Clay Mathematics Institute\cite{F}.

We focus on the axi-symmetric case. In cylindrical coordinates $(r, \theta, z)$ with the basis vectors:
\begin{equation}\nonumber
\textbf{e}_r = \begin{pmatrix}\cos\theta \\ \sin\theta \\ 0\end{pmatrix},\quad \textbf{e}_\theta = \begin{pmatrix}- \sin\theta \\ \cos\theta \\ 0\end{pmatrix},\quad \textbf{e}_z = \begin{pmatrix} 0 \\ 0 \\ 1\end{pmatrix},
\end{equation}
we write $\mathbf{v}=v^r \mathbf{e}_r + v^\theta \mathbf{e}_\theta  + v^z \mathbf{e}_z$. By \textit{axi-symmetric}, we mean that $v^r$, $v^z$ and $v^\theta$ depend only on $(t, r, z)$, but not on $\theta$. Here is the main result, whose proof will be given in subsequent sections.
\begin{theorem}
\label{thzhouqi}
Let $\mathbf{v}=v^r \mathbf{e}_r + v^\theta \mathbf{e}_\theta  + v^z \mathbf{e}_z $ be a bounded ancient mild solution to the axi-symmetric Navier-Stokes equations, such that $\Gamma= r v^\theta$ is bounded. Suppose $\mathbf{v}$ is periodic in the $z$ variable. Then $\mathbf{v}\equiv c \mathbf{e}_z$ where $c$ is a constant.
\end{theorem}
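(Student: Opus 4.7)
The plan is a two-stage reduction: first kill the swirl by a maximum-principle argument for $\Gamma = rv^\theta$, then exploit the $z$-periodicity to collapse the remaining no-swirl system to a two-dimensional linear Liouville problem. A short direct computation shows that $\Gamma$ satisfies the linear drift-diffusion equation
\begin{equation*}
\partial_t\Gamma + v^r\partial_r\Gamma + v^z\partial_z\Gamma = \Big(\partial_r^2 - \frac{1}{r}\partial_r + \partial_z^2\Big)\Gamma,
\end{equation*}
with bounded drift $(v^r, v^z)$ and the axis condition $\Gamma|_{r=0}=0$. Since $\Gamma$ is a bounded ancient solution, the supremum (and infimum) of $\Gamma$ in space should be non-increasing in $t$, and hence constant on $(-\infty,0]$. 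The operator $L=\partial_r^2 - r^{-1}\partial_r + \partial_z^2$ admits the divergence-form representation $Lf = r\,\partial_r(r^{-1}\partial_r f) + \partial_z^2 f$, which together with a strong maximum / Harnack principle and the axis condition should force $\Gamma \equiv 0$.

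Having eliminated the swirl, the velocity $\mathbf{v} = v^r\mathbf{e}_r + v^z\mathbf{e}_z$ is purely meridional and divergence-free. I would Fourier-decompose in $z$, writing $\mathbf{v} = \bar{\mathbf{v}}(t,r) + \tilde{\mathbf{v}}(t,r,z)$ with $\bar{\mathbf{v}}$ the $z$-average. Incompressibility $\partial_r v^r + v^r/r + \partial_z v^z = 0$ combined with regularity at the axis forces $\bar v^r \equiv 0$, so $\bar{\mathbf{v}} = \bar v^z(t,r)\mathbf{e}_z$. The fluctuation $\tilde{\mathbf{v}}$ has zero $z$-mean, so the Poincar\'e inequality $\|\tilde{\mathbf{v}}\|_{L^2_z} \lesssim \|\partial_z\tilde{\mathbf{v}}\|_{L^2_z}$ furnishes a uniform spectral gap. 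I would then run weighted energy estimates on $\tilde{\mathbf{v}}$ over sliding time-windows, using the Poincar\'e gap to absorb the nonlinear transport terms (which remain harmless because $\mathbf{v}$ is bounded) and sending the window to $(-\infty,0]$ to conclude $\tilde{\mathbf{v}}\equiv 0$. The surviving $\bar v^z(t,r)$ is then a bounded ancient solution of the linear axi-symmetric heat equation on $\mathbb{R}^2$, which by the classical heat-equation Liouville theorem must be a constant $c$, giving $\mathbf{v}\equiv c\,\mathbf{e}_z$.

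The main obstacle will be the first stage: on the unbounded meridian half-plane $\{r>0\}\times\mathbb{T}^1$, the supremum of $\Gamma$ need not be attained, so a naive maximum principle is insufficient, and the singular lower-order term $-\frac{1}{r}\partial_r$ demands care near the axis. One typically needs a quantitative heat-kernel or Nash--Moser estimate for $L$ with bounded drift, exploiting the divergence-form structure above, to rule out extrema escaping to infinity; this is reminiscent of the techniques used in the axi-symmetric Navier--Stokes regularity literature and in the papers on the Liouville conjecture cited in the introduction. A secondary difficulty in the second stage is controlling the nonlinear coupling $\bar{\mathbf{v}}\cdot\nabla\tilde{\mathbf{v}} + \tilde{\mathbf{v}}\cdot\nabla\bar{\mathbf{v}}$ and the pressure-fluctuation interaction against the Poincar\'e-gap dissipation in a regime where $\mathbf{v}$ is only bounded, which may necessitate a bootstrap in regularity using the parabolic smoothing built into the mild-solution hypothesis.
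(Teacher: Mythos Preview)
Your outline correctly identifies the architecture of the argument --- reduce to $\Gamma\equiv 0$ via a Harnack/Nash--Moser scheme, then invoke the no-swirl Liouville theorem --- but the decisive mechanism in Stage~1 is missing. The obstruction is not the divergence form of the elliptic part $L$ (that is routine), nor the singular coefficient near $r=0$; it is the drift $\mathbf{b}\cdot\nabla\Gamma$ with $\mathbf{b}$ merely bounded. For a bounded drift, the De~Giorgi--Nash--Moser machinery produces Harnack constants that depend on $R\|\mathbf{b}\|_{L^\infty}$, so the oscillation-decay estimate degenerates as $R\to\infty$ and cannot be iterated. Critical structure on $\mathbf{b}$ (e.g.\ $|\mathbf{b}|\le C/r$, or $\mathbf{b}\in BMO^{-1}$) is exactly what the previous literature needed, and it is absent here. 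The paper's key observation, which your proposal does not contain, is that periodicity of $\mathbf{b}$ in $z$ bounds the \emph{oscillation} of the angular stream function $L_\theta$ uniformly: writing $v^r=-\partial_z\big(L_\theta(t,r,z)-L_\theta(t,r,0)\big)$ with $|L_\theta(t,r,z)-L_\theta(t,r,0)|\lesssim Z_0\|v^r\|_{L^\infty}$, one integrates the drift term by parts in $z$ and absorbs it into the gradient of the solution with a scale-independent constant. This is what makes both the Moser iteration (mean-value inequality on cylinders $D_R\times(-R^2,0)$) and the Nash-type lower bound work uniformly in $R$, after which the oscillation decay \eqref{eq-11b}-style argument closes. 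Without this stream-function step, your Stage~1 does not go through.

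For Stage~2 the paper simply quotes the no-swirl Liouville theorem of Koch--Nadirashvili--Seregin--\v{S}ver\'ak, so your Fourier/Poincar\'e-gap plan is a genuinely different route to an already-known fact. It is plausible but not the crux; note that the no-swirl system is still nonlinear through $\mathbf{b}$, so the ``linear heat Liouville'' endgame you sketch for $\bar v^z$ would require first killing $\tilde{\mathbf{v}}$ and $\bar v^r$, and the energy argument for $\tilde{\mathbf{v}}$ with only $L^\infty$ control on $\mathbf{v}$ faces the same large-scale issue as above unless you again exploit periodicity quantitatively.
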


We emphasize that, in \cite{KNSS}, G. Koch, N. Nadirashvili, G. A. Seregin and V. ${\rm \breve{S}}$ver${\rm \acute{a}}$k \cite{KNSS} conjectured that bounded mild ancient solutions of axi-symmetric Navier-Stokes equations are constants. They also partially proved it under the conditions $\|\Gamma\|_{L^\infty} \leq C_\ast$ and $|\textbf{v}| \leq \frac{C_\ast}{r}$ (or $|\textbf{v}| \leq \frac{C_\ast}{\sqrt{-t}}$) instead of the above periodicity. In independent works of  C. C. Chen, R. M. Strain, H. T. Yau and T. P. Tsai \cite{CSTY1, CSTY2}, regularity of solutions under similar conditions as in \cite{KNSS} is proved. See also \cite{LZ11,S2011} for a generalization of these Liouville type and regularity results to the case that $\mathbf{v} \in L^\infty(0, T; BMO^{-1}_x)$. Indeed, while those above conditions are unverified constraints imposed on solutions, they imply the boundedness of certain scale invariant energy quantities, which further implies the assumptions in many known results on regularity of axi-symmetric solutions, see \cite{S2011} for more details. We also mention that global regularity in the case of axial symmetry with zero swirl has been treated in the classical work \cite{L1968}. In the recent work \cite{LZ2017}, the critical nature of axi-symmetric Navier-Stokes equations is exploited and regularity is proved under the condition that $|\Gamma| \le \frac{C_\ast}{|\ln r|^2}$ near the symmetry axis (see \cite{Wei} for an improvement).

Theorem \ref{thzhouqi} settles the important conjecture in \cite{KNSS}, in the case that $\mathbf{v}$ is periodic in $z$. A crucial observation in the proof is to connect the compactness of $\mathbb{T}^1$ and the oscillation of the stream functions (see the proof below for details). The result might help to enhance the understandings of the local structure of potential singularities of the Cauchy problem, since the blowup procedure applied to the potential singularities of solutions of the Cauchy problem may, at least as a possible case, produce bounded ancient solutions which are periodic in $z$. The result is also interesting by itself, as a study on the Liouville property of the bounded mild ancient solutions to the axi-symmetric Navier-Stokes equations.

Note that in the theorem we have imposed the constraints that $\mathbf{v}$ and $\Gamma$ are bounded, and $\mathbf{v}$ is mild. In fact, they are  all very natural as have been explained in \cite{KNSS}. For a self-contained presentation, here we still give a brief explanation.

Denote by $\mathbb{P}$ the Helmholtz projection of vector fields onto divergence free fields, and by $\mathbb{S}$ the solution operator of the heat equation (i.e. convolution with the heat kernel). A solution $\mathbf{v}$ is said to be \textit{mild} on the time interval $[0,T)$, if the following integral version of \eqref{eq-NS}
\begin{equation*}
\mathbf{v}(t) = \mathbb{S}(t-s)\mathbf{v}(s) + \int_s^t \mathbb{S}(t-\tau) \mathbb{P} \text{div}\, \mathbf{\mathbf{v}(\tau) \otimes \mathbf{v}(\tau)} d\tau
\end{equation*}
holds for all $0\le s \le t < T$.

An important feature of the axi-symmetric Navier-Stokes equations is that $\Gamma$ satisfies the following equation
\begin{equation}\label{Gamma/vtheta}
\partial_t\Gamma +(\mathbf{b}\cdot\nabla)\Gamma+\frac{2}{r}\partial_r
\Gamma= \Delta \Gamma,
\end{equation}
where
\begin{equation} \label{eq-b}
\mathbf{b} = v^r \mathbf{e}_r + v^z \mathbf{e}_z, \quad \text{div} \,  \mathbf{b}=0.
\end{equation}
Then the parabolic maximum principle for $\Gamma$ implies that  $\|\Gamma\|_{L^\infty}$ is bounded uniformly in time if it is bounded initially.

Now we consider the mild solution $\mathbf{u}$ to the Cauchy problem of axi-symmetric Navier-Stokes equations starting from smooth initial data with sufficient spatial decay. Suppose that $\mathbf{u}$ is smooth on $[0, T)$ and blows up at some point $(T, x_0)$. In view of the partial regularity theory of Caffarelli-Kohn-Nirenberg \cite{CKN}, one concludes that $x_0$ must be on the axis so that $x_0 = (0, 0, z_0)$, and $z_0$ must be bounded. Moreover, one can find a sequence of points $(t_n, x_n)$ with $t_n \nearrow T$ and $x_n = (x_{1n},0, z_n) \to x_0'$ such that
$$M_n = |\mathbf{u}(t_n, x_n)| = \sup_{t \leq t_n}|\mathbf{u}(t, \cdot)| \nearrow \infty.$$
To study the local singularity structure, we apply the blow-up procedure to $\mathbf{u}$:
$$\mathbf{u}^{(n)}(t, x) = \frac{1}{M_n}\mathbf{u}(t_n + \frac{t}{M_n^2}, x_n + \frac{x}{M_n}).$$
Clearly, $\mathbf{u}^{(n)}$ is a sequence of bounded mild soultions of the Navier-Stokes equations. By \cite{KNSS} and \cite{LZpjm}, $\mathbf{u}^{(n)}$ locally uniformly converges to a bounded mild ancient solution $\mathbf{v}$ to the Navier-Stokes equations which is either a two-dimensional one  or a three-dimensional axi-symmetric one. If $\mathbf{v}$ is two-dimensional, then we fall into a simple case and $\mathbf{v}$ is constant by \cite{KNSS}. If $\mathbf{v}$ is axi-symmetric,  then one naturally has the boundedness of $\Gamma$ for the ancient solution $\mathbf{v}$, since $\Gamma$ is scale invariant under the above natural scaling of Navier-Stokes equations. This explains that the boundedness of $\Gamma$ in Theorem \ref{thzhouqi} is acceptable.

The Strategy of proving Theorem \ref{thzhouqi} is as follows. The Liouville theorem for axi-symmetric Navier-Stokes equations without swirl (i.e. $v^\theta = 0$) has been established in \cite{KNSS}. Here, we are going to show that $\Gamma \equiv 0$ under the constraints of Theorem \ref{thzhouqi}. Our strategy is to apply the Nash-Moser method \cite{N,M1} to \eqref{Gamma/vtheta}. The main obstacle is the convection term $\mathbf{b} \cdot \nabla \Gamma$. In general, a kind of critical assumptions on $\mathbf{b}$ are necessary (for instance, $|\mathbf{b}| \le \frac{C_\star}{r}$, or $|\mathbf{b}| \le \frac{C_\star}{\sqrt{-t}}$ or $\mathbf{b}\in BMO^{-1}$. See \cite{KNSS,LZ11} for references). Here we use a modified Nash-Moser approach which is of independent interest, and exploit the inherent oscillation information on the stream function from $z$-periodicity. This overcomes the lack of crucial critical assumptions on $\mathbf{b}$.

The proof will be given in three steps. In Section 2, we use adapted Moser iteration to prove a mean value inequality which gives local maximum estimates. In Section 3, we use a Nash's inequality to provide lower bounds of certain solutions to \eqref{Gamma/vtheta}. In Section 4, we prove Harnack type estimates and finish the proof.

\section{Mean Value Inequality} \label{step1}
Usually, the first step of the Nash-Moser approach is to obtain a kind of mean value inequality like
$$\sup_{Q_{R/2}} |\Lambda | \le C \left\{ \frac{1}{R^5} \iint_{Q_R} |\Lambda|^2 dx ds \right\}^{\frac12},$$
for subsolutions $\Lambda$ to \eqref{Gamma/vtheta}, where $Q_R=  (-R^2,0) \times B_R $ is the standard parabolic cube, see for instance \cite{M1,LZ11,NU,SSSZ}. Here we have to use a rather different choice of space-time domains, adapted to the periodicity condition. Suppose $\mathbf{v}$ in Theorem \ref{thzhouqi} has period $Z_0$ in the $z$ direction. We write
\[
D_R = \{x \in \mathbb{R}^3 \, |\, 0\le r <R, \theta \in \mathbb{S}^1, 0\le z<Z_0\}
\]
for $R>0$, and $P_R= (-R^2,0) \times D_R$.

\begin{lemma}\label{lemma-21}
Assume that $\Lambda\ge 0$ is a Lipschitz subsolution  to \eqref{Gamma/vtheta} in $(-\infty,0] \times \mathbb{R}^3 $, with $\mathbf{b}$ as in \eqref{eq-b} and bounded, i.e. $\Lambda$ satisfies
\begin{equation} \label{eq-Phi}
\partial_t \Lambda - \Delta \Lambda + \frac{2}{r} \partial_r \Lambda + \mathbf{b} \cdot \nabla \Lambda\le0,
\end{equation}
in the sense of distributions. Also assume that $\Lambda$ has period $Z_0$ in the $z$-direction and $\Lambda\big|_{r=0}=0$. Then for any $R\ge1$, we have
\begin{equation} \label{mean-value-ineq}
\sup_{P_{R/2}} |\Lambda | \le C_\star \left\{ \frac{1}{R^4} \iint_{P_R} |\Lambda|^2 dx ds \right\}^{\frac12},
\end{equation}
where the constant $C_\star$ does not depend on $R$.
\end{lemma}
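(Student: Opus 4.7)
The plan is a parabolic Moser iteration adapted to the cylindrical, $z$-periodic domain $P_R=(-R^2,0)\times D_R$: derive a Caccioppoli-type energy inequality by testing \eqref{eq-Phi} against a suitable power of $\Lambda$ with a space-time cutoff; apply a parabolic Gagliardo--Nirenberg embedding matched to the effective 2D-plus-compact geometry of $D_R$, reflecting the natural volume $|P_R|\sim R^4$; then iterate across a nested tower of radii.

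For the energy step, fix $p\ge 1$ and radii $R/2\le R''<R'\le R$, and pick an axisymmetric $Z_0$-periodic cutoff $\phi$ equal to $1$ on $P_{R''}$, vanishing outside $P_{R'}$, with $|\partial_t\phi|+|\nabla\phi|^2\lesssim(R'-R'')^{-2}$ and with $\partial_r\phi$ supported in $\{r\ge R''\ge R/2\}$. Multiplying \eqref{eq-Phi} by $\phi^2\Lambda^{2p-1}$ and integrating over $P_R$, each term is standard except for the axis drift and the convection. For the axis drift $\frac{1}{p}\iint \phi^2 \frac{1}{r}\partial_r(\Lambda^{2p})\,dx\,dt$: the cylindrical volume element $dx=r\,dr\,d\theta\,dz$ cancels the $1/r$ weight, and integration by parts in $r$ reduces this to $-\frac{1}{p}\iint\partial_r(\phi^2)\Lambda^{2p}\,dr\,d\theta\,dz\,dt$, with the boundary trace at $r=0$ vanishing by $\Lambda|_{r=0}=0$ and that at $r=R$ by the cutoff. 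For the convection, using $\nabla\cdot\mathbf{b}=0$, one has $\iint\phi^2\Lambda^{2p-1}\mathbf{b}\cdot\nabla\Lambda\,dx\,dt = -\frac{1}{p}\iint\phi\Lambda^{2p}\mathbf{b}\cdot\nabla\phi\,dx\,dt$, controlled by $\tfrac{\|\mathbf{b}\|_\infty}{p(R'-R'')}\iint\Lambda^{2p}$. After a Cauchy--Schwarz absorption of the cross gradient, this yields
\[
\sup_t\int_{D_R}\phi^2\Lambda^{2p}\,dx + \iint_{P_R}|\nabla(\phi\Lambda^p)|^2\,dx\,dt \;\le\; \frac{C\,p}{(R'-R'')^2}\iint_{P_R}\Lambda^{2p}\,dx\,dt,
\]
with $C$ depending on $\|\mathbf{b}\|_\infty$ and $Z_0$.

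For the iteration step, apply to $u=\phi\Lambda^p$ a parabolic Gagliardo--Nirenberg inequality of 2D type on $D_R$: combining the slicewise 2D estimate $\|f\|_{L^4(\mathbb{R}^2)}^4\le C\|f\|_{L^2}^2\|\nabla f\|_{L^2}^2$ with a 1D Poincar\'e--Sobolev argument in $z$ (permitted by the compactness of $\mathbb{T}^1$ and $u$'s vanishing on $\partial B_R\times\mathbb{T}^1$) yields
\[
\iint_{P_R}|u|^4\,dx\,dt \;\le\; C(Z_0)\bigl(\sup_t \int|u|^2\,dx\bigr)\iint_{P_R}|\nabla u|^2\,dx\,dt.
\]
Combined with the Caccioppoli estimate this gives
\[
\iint_{P_{R''}}\Lambda^{4p}\,dx\,dt \;\le\; \Bigl(\tfrac{C\,p}{(R'-R'')^2}\iint_{P_R}\Lambda^{2p}\,dx\,dt\Bigr)^2.
\]
Setting $p_k=2^k$ and $R_k=\tfrac{R}{2}(1+2^{-k})$ (so $R_k-R_{k+1}\asymp R\,2^{-(k+2)}$), iterating and letting $k\to\infty$, the telescoping product of iteration constants converges since $\sum_k 1/p_k=2$, and a careful bookkeeping of the $R$-factors produces
\[
\sup_{P_{R/2}}\Lambda \;\le\; C_\star\Bigl(R^{-4}\iint_{P_R}\Lambda^2\,dx\,dt\Bigr)^{1/2},
\]
with $C_\star$ depending on $\|\mathbf{b}\|_\infty$ and $Z_0$ but independent of $R\ge 1$. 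The main obstacle is the axis-singular drift $\tfrac{2}{r}\partial_r\Lambda$, overcome by the interplay of the cylindrical volume weight $r$ (cancelling the $1/r$) and the axis-vanishing $\Lambda|_{r=0}=0$ (killing the boundary trace); the convection $\mathbf{b}\cdot\nabla\Lambda$ is benign by $\nabla\cdot\mathbf{b}=0$, and the choice of a 2D-type rather than 3D-type parabolic Gagliardo--Nirenberg (justified by the compactness of the $z$-direction) is what produces the $R^{-4}$ scaling dictated by $|P_R|\sim R^4$.
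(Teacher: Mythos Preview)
Your iteration scheme is the right template, but the treatment of the convection term $\mathbf{b}\cdot\nabla\Lambda$ is where the argument breaks, and this is precisely the point of the lemma. After integrating by parts with $\nabla\cdot\mathbf{b}=0$, you correctly obtain a contribution of size $\dfrac{\|\mathbf{b}\|_\infty}{p(R'-R'')}\iint\Lambda^{2p}$. However, you then assert that the resulting Caccioppoli constant is $\dfrac{Cp}{(R'-R'')^2}$ with $C$ independent of $R$. This is false at the initial iteration steps: with $p=p_0=1$ and $R'-R''\asymp R$, the convection term gives a factor $\sim\dfrac{\|\mathbf{b}\|_\infty}{R}$, which for large $R$ is \emph{much larger} than $\dfrac{1}{R^2}$. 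Carrying this through the iteration (your product $\prod_k A_k^{1/p_{k+1}}$) yields a final bound
\[
\sup_{P_{R/2}}\Lambda \;\lesssim\; \frac{1+\|\mathbf{b}\|_\infty R}{R^2}\Bigl(\iint_{P_R}\Lambda^2\Bigr)^{1/2},
\]
so the ``constant'' $C_\star$ grows linearly in $R$, defeating the purpose of the lemma (and the downstream Harnack argument, which needs $C_\star$ uniform in $R$).

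The paper's proof handles this by the stream-function oscillation trick: writing $v^r=-\partial_z\bigl(L_\theta(t,r,z)-L_\theta(t,r,0)\bigr)$ and using the $z$-periodicity to bound the oscillation $|L_\theta(t,r,z)-L_\theta(t,r,0)|\le Z_0\|v^r\|_\infty$. After integration by parts in $z$, the convection contribution becomes
\[
\iint \bigl(L_\theta-L_\theta(t,r,0)\bigr)\,\partial_z(\Lambda^2)\,\partial_r\psi_R^2 \;\le\; C\iint\Lambda^2(\partial_r\psi_R)^2 + \tfrac18\iint(\partial_z\Lambda)^2\psi_R^2,
\]
the last piece being absorbed into the Dirichlet energy and the first having the correct $\dfrac{1}{R^2}$ scaling. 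This is the essential use of the compactness of $\mathbb{T}^1$ in the energy step, not merely in the Sobolev embedding; your remark that ``the convection $\mathbf{b}\cdot\nabla\Lambda$ is benign by $\nabla\cdot\mathbf{b}=0$'' misses exactly this point. A secondary issue is that your claimed $L^4$ Gagliardo--Nirenberg on $D_R$ does not follow cleanly from slicewise 2D Ladyzhenskaya plus 1D control (an extra $\|\nabla u\|_{L^2_x}^3$-type term appears); the paper instead proves and uses the embedding $\|f\|_{L^3(D_R)}\le CR^{2/3}\|\nabla f\|_{L^2(D_R)}$ and iterates with exponent $5/4$.
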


We will apply Lemma \ref{lemma-21} to the case $\Lambda = (\delta - \Phi)_+$, where $0 < \delta < 1$ is a constant and $\Phi$ will be defined in Section 4. In the proof of Lemma \ref{lemma-21} below, a key point is to use two-dimensional cut-off functions. It is worth noticing that the mean value inequality \eqref{mean-value-ineq} may not hold when $R$ approaches 0, while we only need it when $R \ge 1$.

\begin{proof}
We apply a modified version of Moser's iteration technique. In the proof, $C$ represents constants independent of $R$, whose value may change from line to line.  Set $\frac12 \le \sigma_2 < \sigma_1 \le 1$ and choose $\psi_1(s,r,\theta,z)=\eta_1(s)\phi_1(r)$ to be a smooth cut-off function defined on $P_1$ satisfying:
\begin{equation} \label{cutoff-phieta}
\begin{cases}
\text{supp}\,\phi_1 \subset D_{\sigma_1}, \quad \phi_1=1\,\, \text{on}\,\, D_{\sigma_2}, \\
\text{supp}\,\eta_1 \subset (-(\sigma_1)^2,0], \quad \eta_1(s)=1 \,\, \text{on}\,\, (-(\sigma_2)^2,0],\\
0 \le \phi_1 \le 1,\quad 0 \le \eta_1\le 1, \\
|\eta_1'| \lesssim \frac{1}{(\sigma_1-\sigma_2)^2}, \quad |\nabla \phi_1| \lesssim \frac{1}{\sigma_1-\sigma_2}.
\end{cases}
\end{equation}
Consider the cut-off functions $\psi_R(s,x) = \eta_1(\frac{s}{R^2})\phi_1(\frac{x}{R})$. Testing \eqref{eq-Phi} by $\Lambda \psi_R^2$ gives
\begin{align*}
& \quad -\frac{1}{2} \int_{-\infty}^t\int_{D_R} \left( \partial_s \Lambda^2 + (\mathbf{b}\cdot \nabla) \Lambda^2 + \frac{2}{r}\partial_r \Lambda^2 \right) \psi_R^2 dx ds\\
&\ge -\int_{-\infty}^t\int_{D_R} (\Delta \Lambda) \Lambda \psi_R^2 dx ds \\
&= \int_{-\infty}^t\int_{D_R} \left(|\nabla \Lambda|^2 \psi_R^2 + \Lambda \nabla \Lambda \cdot \nabla \psi_R^2 \right) dx ds,
\end{align*}
for any $t\le 0$. From now on we often abbreviate $\int_{-\infty}^t\int_{D_R}$ as $\iint$ and omit $dx ds$, unless there is any confusion. Since
\begin{align*}
\iint |\nabla \Lambda|^2 \psi_R^2\ge \iint \left( \frac12 |\nabla(\Lambda \psi_R)|^2 - \Lambda^2 |\nabla \psi_R|^2 \right),
\end{align*}
we get
\begin{align} \label{eq-21a}
&\quad \int (\Lambda^2 \psi_R^2)(t,\cdot) dx + \iint |\nabla (\Lambda \psi_R)|^2   \nonumber \\
&\le \iint -\left(  (\mathbf{b}\cdot \nabla) \Lambda^2 + \frac{2}{r}\partial_r \Lambda^2 \right) \psi_R^2  \\
&+ \iint \left(\Lambda^2 \partial_s \psi_R^2+2 \Lambda^2 |\nabla \psi_R|^2 -2 \Lambda \nabla\Lambda \cdot \nabla \psi_R^2 \right). \nonumber
\end{align}

Now we treat the right hand side term by term. For the first term, we use $\nabla \cdot \mathbf{b} =0$ to get
\begin{align} \label{eq-21b}
-&\iint (\mathbf{b}\cdot \nabla) \Lambda^2 \psi_R^2 = -\iint (v^r \partial_r \Lambda^2 +v^z \partial_z \Lambda^2) \psi_R^2 \nonumber \\
&=\iint (\partial_r v^r+ \frac{v^r}{r}+\partial_z v^z) \Lambda^2\psi_R^2 + v^r \Lambda^2 \partial_r \psi_R^2\nonumber\\
&= \iint v^r \Lambda^2 \partial_r \psi_R^2.
\end{align}
Define the usual angular stream function $L_\theta(t,r,z)$ by
\begin{align*}
\nabla \times (L_\theta \mathbf{e}_\theta) = v^r \mathbf{e}_r + v^z \mathbf{e}_z,
\end{align*}
Such an $L_\theta$ exists since $\mathbf{b}$ is divergence free. Moreover $L_\theta$ is periodic in $z$ with period $Z_0$ under our assumptions. We write
\begin{equation} \label{L-infty-(-1)}
v^r=-\partial_z L_\theta= -\partial_z (L_\theta(t,r,z)-L_\theta(t,r,0)).\\
\end{equation}
One can check that the oscillation of $L_\theta$ in $z$ satisfies
\begin{equation}\label{oscillationL}
  |L_\theta(t,r,z)-L_\theta(t,r,0)| \le \sup|v^r(t,r,\cdot)| Z_0 \lesssim 1,
\end{equation}
for any $z \in \mathbb{R}$. Hence we have
\begin{align} \label{eq-21c}
&\iint v^r \Lambda^2 \partial_r \psi_R^2= \iint \left(L_\theta(t,r,z)-L_\theta(t,r,0)\right) \partial_z \Lambda^2 \partial_r \psi_R^2\nonumber \\
&\quad \quad\le C \iint \Lambda^2 (\partial_r \psi_R)^2 + \frac{1}{8}\iint (\partial_z \Lambda)^2 \psi_R^2\nonumber \\
&\quad \quad\le \frac{C}{(\sigma_2-\sigma_1)^2R^2} \iint_{P_{\sigma_1 R}} \Lambda^2  + \frac{1}{8}\iint (\partial_z (\Lambda \psi_R))^2.
\end{align}
For the second term in \eqref{eq-21a}, using $\Lambda\big|_{r=0}=0$ we get
\begin{align}\label{eq-21d}
-\iint \frac{2}{r} \partial_r \Lambda^2 \psi_R^2 &= \iint 2\Lambda^2  \frac{\partial_r\psi_R^2}{r}\nonumber \\
&\le \frac{C}{(\sigma_2-\sigma_1)R^2}\iint_{P_{\sigma_1 R}} \Lambda^2 .
\end{align}
The last three terms in \eqref{eq-21a} are easier:
\begin{align} \label{eq-21e}
&\quad \iint \Lambda^2 \partial_s \psi_R^2 + 2 \Lambda^2 |\nabla \psi_R|^2 - 2\Lambda \nabla\Lambda \cdot \nabla \psi_R^2 \nonumber\\
&\le \frac{C}{(\sigma_2-\sigma_1)^2R^2} \iint_{P_{\sigma_1 R}} \Lambda^2  + \frac{1}{8} \iint |\nabla (\Lambda \psi_R)|^2.
\end{align}
Combing \eqref{eq-21a},\eqref{eq-21b},\eqref{eq-21c},\eqref{eq-21d},\eqref{eq-21e}, and using the properties of the cutoff functions \eqref{cutoff-phieta}, we arrive at
\begin{align}
\label{energy}
&\sup_{t\le 0} \|(\Lambda\phi_R)(t,\cdot)\|_{L_x^2(D_{R})}^2 + \|\nabla (\Lambda\psi_R)\|_{L_t^2 L_x^2(P_{R})}^2 \nonumber\\
&\quad \quad \le \frac{C}{(\sigma_1-\sigma_2)^2R^2} \iint_{P_{\sigma_1 R}} \Lambda^2 .
\end{align}

We have to use the following Sobolev embedding inequality for periodic functions:
\begin{align} \label{sobolev-embedding}
\|f\|_{L_x^3(D_{1})} \le C\|\nabla f\|_{L_x^2(D_{1})},
\end{align}
for any $f$ having period $Z_0$ in $z$ and compactly supported in $r \le 1$ in the other two dimensions. To verify \eqref{sobolev-embedding}, one can argue as follows. Choose a cut-off function \begin{equation*}
g(z) = \begin{cases}
1, \quad 0<z\le NZ_0,\\
2-\frac{z}{NZ_0}, \quad NZ_0 \le z < 2NZ_0,\\
0, \quad \text{otherwise},
\end{cases}
\end{equation*}
with $N$ large. By the usual Sobolev embedding, after extending $f$ to the whole space in the periodic way along the $z$ axis, we deduce
\begin{align*}
&\quad \frac{N^\frac13}{2}\|f\|_{L^3(D_1)} \le \|fg\|_{L^3(R^3)} \\
& \le C N^{\frac16} \|fg\|_{L^6(R^3)} \le C N^{\frac16}\|\nabla (fg)\|_{L^2(R^3)}\\
&\le C N^{\frac16}\| (\nabla f)g\|_{L^2(R^3)} + CN^{\frac16} \|f(\partial_z g)\|_{L^2(R^3)}\\
&\le C N^\frac23\| \nabla f\|_{L^2(D_1)} + C N^{-\frac13} \|f\|_{L^2(D_1)},
\end{align*}
which clearly implies \eqref{sobolev-embedding}. By scaling argument in the $x_1$ and $x_2$ directions, we have
\begin{align*}
R^{-\frac{2}{3}} \|(\Lambda\psi_R)(t,\cdot)\|_{L_x^3(D_{R})} \le C\|\nabla (\Lambda\psi_R)\|_{L_x^2(D_{R})}.
\end{align*}
We emphasize here that $R$ should be bounded from below, say by $1$. Interpolation from \eqref{energy} gives
$$\left(\frac{1}{R^4} \iint_{P_{\sigma_2 R}} \Lambda^{\frac{5}{2}} \right)^\frac{2}{5} \le \frac{C}{\sigma_1-\sigma_2} \left(\frac{1}{R^4} \iint_{P_{\sigma_1 R}} \Lambda^2\right)^\frac{1}{2},$$
where $C$ does not depend on $R$.
Observe that $\Lambda^{(\frac54)^k}, \quad k \ge 1$ are also positive subsolutions to \eqref{Gamma/vtheta}. Hence one can clearly repeat the above estimates to derive
\begin{align*}
&\quad\quad\left(\frac{1}{R^4} \iint_{P_{\sigma_{2k} R}} \Lambda^{2\times(\frac54)^{k+1}} \right)^\frac{2}{5}  \\
&\le \frac{C}{\sigma_{1k}-\sigma_{2k}} \left(\frac{1}{R^4} \iint_{P_{\sigma_{1k} R}} \Lambda^{2\times(\frac54)^k}\right)^\frac{1}{2},
\end{align*}
for any $\frac{1}{2}\le\sigma_{2k}<\sigma_{1k}\le 1$. This is equivalent to
\begin{align*}
&\quad\quad\left(\frac{1}{R^4} \iint_{P_{\sigma_{2k} R}} \Lambda^{2\times(\frac54)^{k+1}} \right)^{\frac12\times (\frac45)^{k+1}} \\
&\le \left(\frac{C}{\sigma_{1k}-\sigma_{2k}}\right)^{(\frac45)^k} \left(\frac{1}{R^4} \iint_{P_{\sigma_{1k} R}} \Lambda^{2\times(\frac54)^k}\right)^{\frac12\times (\frac45)^k}.
\end{align*}
It remains to choose $\sigma_{1k}$ and $\sigma_{2k}$ converging to $\frac{1}{2}$ and iterate the above inequalities. This process is standard \cite{M1}, thus omitted.
\end{proof}

\section{Estimates for $-\ln \Phi$}
In this section, we prove two important lemmas, which will lead to lower bounds of certain solutions of \eqref{Gamma/vtheta} in Section 4. The proofs are based on ideas in \cite{LZ11}, \cite{CSTY1} and \cite{Z}. The observations \eqref{L-infty-(-1)}, \eqref{oscillationL} made in Section 2 will be essentially used here again.

Assume that $\Phi$ is a positive $z$-periodic solution to \eqref{Gamma/vtheta} in $P_R=(-R^2,0) \times D_R$. Without loss of generality, we let the $z$-period to be $Z_0=1$ from now on, for simplicity of presentation. We also assume that $\Phi\big|_{r=0} \ge \frac12$. In Section 4, $\Phi$ will be taken as \eqref{def-Phi}.

We denote $\Psi= -\ln \Phi$. The equation for $\Psi$ reads
\begin{equation} \label{eq-Psi}
\partial_t \Psi + \mathbf{b} \cdot \nabla \Psi + \frac{2}{r} \partial_r \Psi - \Delta \Psi + |\nabla \Psi|^2=0.
\end{equation}
Choose cut-off functions $\zeta_R(r,\theta,z)=\zeta_1(\frac{r}{R})$ such that
\begin{equation} \label{cutoff-zeta}
\begin{cases}
\zeta_R=1,\quad \text{for}\,\, x\in D_{R/2},\\
|\partial_r \zeta_R| \lesssim \frac{1}{R},\,\, \partial_\theta \zeta_R=\partial_z \zeta_R=0.
\end{cases}
\end{equation}
By multiplying \eqref{eq-Psi} with $\zeta_R^2$ and integrating in the space variables only, we get
\begin{align*}
&\quad\partial_t \int_{D_R} \Psi \zeta_R^2 dx + \int_{D_R} |\nabla \Psi|^2 \zeta_R^2 dx \\
&= \int_{D_R} -\mathbf{b} \cdot \nabla \Psi \zeta_R^2 - \frac{2}{r} \partial_r \Psi \zeta_R^2 - \nabla \Psi \cdot \nabla \zeta_R^2 \\
&\le \int_{D_R} -\mathbf{b} \cdot \nabla \Psi \zeta_R^2 - \frac{2}{r} \partial_r \Psi \zeta_R^2 + \frac{1}{6}|\nabla \Psi|^2 \zeta_R^2 + C |\nabla \zeta_R|^2  \nonumber.
\end{align*}
Using the properties \eqref{cutoff-zeta}, we arrive at
\begin{align}\label{eq-3a}
&\quad \partial_t \int \Psi \zeta_R^2 dx + \frac{5}{6}\int |\nabla \Psi|^2 \zeta_R^2 dx \nonumber\\
& \le C + \int \left(-\mathbf{b} \cdot \nabla \Psi - \frac{2}{r} \partial_r \Psi \right)\zeta_R^2 dx.
\end{align}
The drift term can be estimated in the spirit of \eqref{eq-21b} and \eqref{eq-21c},
\begin{align}\label{eq-3b}
\int -\mathbf{b} \cdot \nabla \Psi \zeta_R^2 &= \int (v^r \partial_r \zeta_R^2+ v^z \partial_z \zeta_R^2) \Psi\nonumber \\
&=-\int \partial_z (L_\theta(r,z,t)-L_\theta(r,0,t)) \partial_r \zeta_R^2 \Psi\nonumber\\
&=\int (L_\theta(r,z,t)-L_\theta(r,0,t)) \partial_r \zeta_R^2 \partial_z \Psi\nonumber\\
&\le C+\frac{1}{6} \int |\nabla \Psi|^2 \zeta_R^2.
\end{align}  Here we just used $| D_R | \sim R^2$ for large $R$.

To proceed, we need the weighted Poincar\'e inequality in our periodic domain $D_R\,(R\ge 1)$,
\begin{align} \label{weighted-poincare}
\int_{D_R} |\Psi-\bar{\Psi}|^2\zeta_R^2 dx \le C R^2 \int_{D_R} |\nabla \Psi|^2 \zeta_R^2 dx,
\end{align}
where
\[
\bar{\Psi} = \left( \int \zeta_R^2 dx \right)^{-1} \int \Psi\zeta_R^2 dx.
\]To check this we first use the usual weighted Poincar\'e inequality in two dimensions to deduce
\begin{align*}
\int_{D_R} |\Psi-[\Psi](z)|^2\zeta_R^2 dx \le C R^2 \int_{D_R} |\nabla \Psi|^2 \zeta_R^2 dx,
\end{align*}
where
\[
[\Psi](z)=\left( \iint \zeta_R^2 rdr d\theta \right)^{-1}\iint \Psi \zeta_R^2 rdr d\theta.
\] Moreover, since $[\Psi]$ depends only on $z$, and $\bar{\Psi} = Z^{-1}_0 \int^{Z_0}_0 [\Psi](z) dz$, we have
\begin{align*}
\int_{D_R} |[\Psi]-\bar{\Psi}|^2 \zeta_R^2 dx &\le C R^2 \int_0^{Z_0} |[\Psi]-\bar{\Psi}|^2 dz\\
 &\le C R^2 \left(\int_0^{Z_0} |\partial_z [\Psi]| dz \right)^2 \\
&\le \frac{C}{R^2} \left(\int_{D_R} |\partial_z \Psi| \zeta_R^2 dx\right)^2 \\
&\le C \int_{D_R} |\partial_z \Psi|^2 \zeta_R^2 dx.
\end{align*} Here we have used a one-dimensional Sobolev imbedding, passing from line 1 to line 2.
This proves \eqref{weighted-poincare}.

Now integration by parts and \eqref{weighted-poincare} give
\begin{align} \label{eq-3c}
-\int \frac{2}{r} \partial_r \Psi \zeta_R^2 dx &= 2\iint (\Psi-\bar{\Psi}) \zeta_R^2 d\theta dz \big|_{r=0}  \nonumber \\
&\quad+ 2 \int (\Psi-\bar{\Psi}) \frac{\partial_r \zeta_R^2}{r} dx\nonumber\\
&= C-C \bar{\Psi} +2\int (\Psi-\bar{\Psi}) \frac{\partial_r \zeta_R^2}{r} dx\nonumber\\
&\le C-C \bar{\Psi}+ \frac{1}{6} \int |\nabla \Psi|^2 \zeta_R^2 dx \nonumber\\
&\quad + C R^2 \int \left(\frac{\partial_r \zeta_R}{r}\right)^2 dx \nonumber\\
&\le C'-C \bar{\Psi}+ \frac{1}{6} \int |\nabla \Psi|^2 \zeta_R^2 dx.
\end{align}
Hence, from \eqref{eq-3a},\eqref{eq-3b},\eqref{eq-3c},  we get a crucial differential inequality:
\begin{align} \label{inequality-Psi}
\partial_t \int \Psi \zeta_R^2 dx + C_1 \bar{\Psi} \le -\frac{1}{2} \int |\nabla \Psi|^2 \zeta_R^2 dx + C_2,
\end{align}
for $t\in [-R^2,0]$ and $C_1,\, C_2>0$ independent of $R$. At this point, we claim that the following lemma holds, since the same arguments in \cite{LZ11} can be applied to our situation with some adjustments on the region of integration.

\begin{lemma} \label{lemma-31}
Let $\Phi \le 1$ be a positive z-periodic solution to \eqref{Gamma/vtheta} in $P_R ( R\ge 1)$ which satisfies
\begin{align} \label{lower-bound-l1}
\|\Phi\|_{L^1(P(\frac{R}{2}))}\ge \kappa R^4,
\end{align}
for some $0<\kappa<1$. Moreover we assume that $\Phi\big|_{r=0}\ge \frac{1}{2}$. Then there holds
\begin{align} \label{lemma-31-conclusion}
-\int \zeta_R^2(x) \ln \Phi(t,x) dx \le M R^2,
\end{align}
for all $t\in [-\frac{\kappa R^2}{4},0]$ and some positive constant $M$ depending only on $\kappa$.
\end{lemma}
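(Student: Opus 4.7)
The plan is to recast the differential inequality \eqref{inequality-Psi} as a Gr\"onwall-type ODE for the quantity $F(t) := \int_{D_R} \zeta_R^2(x)\,\Psi(t,x)\,dx$. Since $\bar{\Psi} = F/\int\zeta_R^2\,dx$ and $\int\zeta_R^2\,dx$ is of order $R^2$, the inequality \eqref{inequality-Psi} takes the form
\[
F'(t) + \frac{c_1}{R^2}F(t) + \tfrac12\int|\nabla\Psi(t,x)|^2\zeta_R^2(x)\,dx \le c_2,
\]
for some constants $c_1, c_2 > 0$ independent of $R$. Because $F \ge 0$, a forward Gr\"onwall argument gives: if $F(t_0) \le C_0 R^2$ at some $t_0 \in [-R^2/4, -\kappa R^2/4]$, then $F(t) \le (C_0 + c_2/c_1)R^2$ for every $t \in [t_0, 0]$, in particular throughout the target interval $[-\kappa R^2/4, 0]$. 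The task thus reduces to producing such a time $t_0$.

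To locate $t_0$, I would exploit the $L^1$ hypothesis $\|\Phi\|_{L^1(P_{R/2})} \ge \kappa R^4$ together with $\Phi \le 1$. The ``late'' slab $[-\kappa R^2/4, 0] \times D_{R/2}$ has volume bounded by a prefactor strictly less than $1$ times $\kappa R^4$, so a nontrivial portion of the $L^1$ mass of $\Phi$ must lie in the complementary ``early'' slab $[-R^2/4, -\kappa R^2/4] \times D_{R/2}$. Pigeonholing in time then yields $t_0 \in [-R^2/4, -\kappa R^2/4]$ with $\int_{D_{R/2}} \Phi(t_0, x)\,dx \ge c(\kappa) R^2$. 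A Chebyshev argument (again using $\Phi \le 1$) upgrades this to a set $A \subset D_{R/2}$ with $|A| \ge c(\kappa) R^2$ on which $\Phi(t_0, \cdot) \ge \gamma(\kappa) > 0$, equivalently $\Psi(t_0, \cdot) \le K(\kappa) := \log(1/\gamma)$ on $A$.

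The main obstacle is upgrading this pointwise bound on the large set $A$ to the integral bound $F(t_0) \le C(\kappa) R^2$. Here the weighted Poincar\'e inequality \eqref{weighted-poincare} is the key tool: since $\zeta_R \equiv 1$ on $A \subset D_{R/2}$, Cauchy--Schwarz combined with \eqref{weighted-poincare} gives
\[
\bar{\Psi}(t_0) \le K(\kappa) + \frac{1}{|A|}\int_A |\Psi(t_0,x) - \bar{\Psi}(t_0)|\,dx \le K(\kappa) + C(\kappa)\,\mathcal{E}(t_0)^{1/2},
\]
where $\mathcal{E}(t) := \int|\nabla\Psi(t,x)|^2\zeta_R^2(x)\,dx$. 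The instantaneous Dirichlet energy $\mathcal{E}(t_0)$ can be controlled in a time-averaged sense by integrating the differential inequality over a subinterval (the $\tfrac12\mathcal{E}$ term is then handed to the right-hand side), and a further pigeonhole on $t_0$ allows us to assume $\mathcal{E}(t_0)$ does not exceed a constant multiple of that time-average. The apparent circularity --- the time-average of $\mathcal{E}$ depending on $F$ at the left endpoint --- is broken by a bootstrap/contradiction argument as in \cite{LZ11}: assuming $F(t_0) \gg R^2$ forces $\mathcal{E}(t_0) \gg 1$ via the above display, which in turn drives $F'$ so negative that $F$ cannot remain nonnegative across the relevant time window. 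With $F(t_0) = O(R^2)$ in hand, the Gr\"onwall step concludes the proof.
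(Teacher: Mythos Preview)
Your outline is broadly correct and, structurally, very close to the paper's proof: the forward Gr\"onwall step (your first paragraph) is exactly the paper's observation \eqref{eq-31c}; your pigeonhole on time, producing a large ``good-time'' set where $\int_{D_{R/2}}\Phi(t,\cdot)\,dx\ge c(\kappa)R^2$, is the paper's set $W$ with $|W|\ge \kappa R^2/2$; and the closing mechanism is, in both cases, a Riccati-type inequality for $\bar\Psi$ on that set, as in \cite{LZ11}.

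The genuine difference is in how you obtain the quadratic lower bound for the Dirichlet term on $W$. The paper uses Nash's inequality (Lemma~\ref{lemma-nash}) together with the weighted Poincar\'e inequality to get, for $t\in W$,
\[
\Big|\ln\!\int_{D_R}\Phi\,d\mu+\bar\Psi\Big|^2\Big(\int_{D_R}\Phi\,d\mu\Big)^2\le C\,\mathcal E(t),
\]
which yields $\mathcal E(t)\gtrsim(\bar\Psi(t)-C)^2$. You replace Nash's inequality by Chebyshev plus the weighted Poincar\'e inequality \eqref{weighted-poincare}: from $\int_{D_{R/2}}\Phi(t,\cdot)\ge c(\kappa)R^2$ and $\Phi\le 1$ you extract a set $A_t$ with $|A_t|\ge c(\kappa)R^2$ on which $\Psi(t,\cdot)\le K(\kappa)$, and then
\[
\bar\Psi(t)-K(\kappa)\le \frac{1}{|A_t|}\int_{A_t}|\Psi(t,\cdot)-\bar\Psi(t)|\le C(\kappa)\,\mathcal E(t)^{1/2}.
\]
This is a perfectly valid and somewhat more elementary substitute; it lands on the same inequality $\mathcal E(t)\gtrsim(\bar\Psi(t)-K)^2_+$ on $W$, which is all the Riccati argument needs.

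One point to tighten: your last paragraph tries to pigeonhole to a \emph{single} time $t_0$ at which $\mathcal E(t_0)$ is also controlled, notices the resulting circularity, and then invokes a vague contradiction. This detour is unnecessary. The Chebyshev--Poincar\'e bound above holds at \emph{every} $t\in W$, not just one $t_0$; inserting it directly into \eqref{inequality-Psi} (and dropping the nonnegative term $C_1\bar\Psi$) gives
\[
aR^2\,\partial_t\bar\Psi(t)\le C_2-\tfrac{1}{2}\mathcal E(t)\le -c\,\chi_W(t)\,\bar\Psi(t)^2
\]
whenever $\bar\Psi$ exceeds a fixed threshold depending on $\kappa$. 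This is precisely the Riccati inequality \eqref{eq-31d}; integrating it over $[-R^2/4,-\kappa R^2/4]$ and using $\int\chi_W\ge\kappa R^2/4$ gives an absolute upper bound on $\bar\Psi(-\kappa R^2/4)$, after which your Gr\"onwall step finishes. Phrased this way the argument is clean and involves no bootstrap.
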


\begin{proof}

For the sake of completeness, we present the proof here. Note that
\[
d\mu=\frac{1}{R^2} \Big(\int \zeta_1^2 dx\Big)^{-1} \zeta_R^2 dx
\] is a probability measure.
By Nash's inequality(see Lemma \ref{lemma-nash} below) and the weighted Poincar\'e inequality \eqref{weighted-poincare}, and since $\Psi = - \ln \Phi$,
\begin{align}\label{eq-31a}
&\quad\left|\ln\left(\int_{D_R}\Phi d\mu\right) + \int_{D_R} \Psi d\mu \right|^2 \left(\int_{D_R} \Phi d\mu\right)^2\nonumber\\
&\le |\sup \Phi|^2  \int_{D_R} \left|\Psi -\bar{\Psi}\right|^2 d\mu\nonumber\\
&\le C_3 \int_{D_R} |\nabla \Psi|^2 \zeta_R^2 dx.
\end{align}
For simplicity we write $a=\int \zeta_1^2 dx>0$. Plugging \eqref{eq-31a} into \eqref{inequality-Psi} gives
\begin{align}\label{eq-31b}
&\quad aR^2 \partial_t \bar{\Psi}(t) + C_1 \bar{\Psi}(t) \nonumber \\
&\le C_2 - \frac{1}{2C_3} \left|\ln\int_{D_R}\Phi d\mu + \int_{D_R} \Psi d\mu \right|^2 \left(\int_{D_R} \Phi d\mu\right)^2.
\end{align}
Now we consider the set
\begin{align*}
W=\{s\in [-\frac14 R^2,0]:\int_{D_\frac{R}{2}} \Phi(s) dx \ge \frac{\kappa}{2} R^2\},
\end{align*}
and denote its characteristic function by $\chi(s)$. Due to the condition \eqref{lower-bound-l1}, we have
\begin{align*}
\kappa R^4 \le \int_{P_{R/2}} \Phi dx dt &= \int_W \int_{D_{R/2}} \Phi(s) dx ds \\
& \quad + \int_{[-R^2/4,0]- W} \int_{D_{R/2}} \Phi(s) dx ds \\
&\le |W||D_{R/2}| \sup_{D_{R/2}}|\Phi|  + \frac{R^2}{4} \frac{\kappa R^2}{2}\\
&\le \frac{\pi R^2}{4} |W| + \frac{\kappa R^4}{8}.
\end{align*}
This gives
\begin{align} \label{lower-bound-|W|}
|W| \ge \frac{\kappa R^2}{2}.
\end{align}
From $aR^2 \partial_t \bar{\Psi} + C_1\bar{\Psi} \le C_2$, it is easy to derive that
\begin{align} \label{eq-31c}
\bar{\Psi}(s_2) \le \bar{\Psi}(s_1) + \frac{C_2}{C_1},
\end{align}
for any $-\frac{R^2}{4}\le s_1\le s_2 \le 0$. If for some $\frac{-R^2}{4} \le s\le \frac{-\kappa R^2}{4}$, there holds
$$\bar{\Psi}(s) \le 2\left|\ln \frac{\kappa}{2a}\right|+\frac{8a\sqrt{C_2C_3}}{\kappa},$$
then due to \eqref{eq-31c}, the conclusion \eqref{lemma-31-conclusion} holds with
\begin{equation*}
M=a\left(2\left|\ln \frac{\kappa}{2a}\right|+\frac{8a\sqrt{C_2C_3}}{\kappa} + \frac{C_2}{C_1}\right).
\end{equation*}
Otherwise, for all $-\frac{R^2}{4}\le s_1\le s_2 \le 0$ we have
$$\bar{\Psi}(s) \ge 2\left|\ln \frac{\kappa}{2a}\right| +\frac{8a\sqrt{C_2C_3}}{\kappa}.$$
For $s\in W\cap [-\frac{R^2}{4},-\frac{\kappa R^2}{4}]$, one has
$$\ln \int_{D_R} \Phi(s) d\mu \ge \ln \int_{D_{R/2}} \Phi(s) d\mu \ge \ln\frac{\kappa}{2a}.$$
In this case, \eqref{eq-31b} and $\Psi \ge 0$ gives
\begin{equation} \label{eq-31d}
aR^2 \partial_t \bar{\Psi}(t) \le a R^2 \partial_t \bar{\Psi}(t) + C_1 \bar{\Psi}(t) \le - C_4 \chi(t) \bar{\Psi}(t)^2,
\end{equation}
for $t \in [-\frac{R^2}{4},-\kappa\frac{R^2}{4}]$. Note that \eqref{lower-bound-|W|} implies
\begin{align*}
\int_{-R^2/4}^{-\kappa R^2/4} \chi(s) ds \ge \frac{\kappa R^2}{4}.
\end{align*}
Solving the Riccati type inequality \eqref{eq-31d} clearly gives an absolute upper bound
for $\bar{\Psi}(-\frac{\kappa R^2}{4})$. See \cite{LZ11} Lemma 3.2 for details. The conclusion \eqref{lemma-31-conclusion} follows immediately by \eqref{eq-31c}.
\end{proof}

The Nash inequality used earlier can be found in \cite{CSTY2}. We give an easier proof here.
\begin{lemma}\label{lemma-nash}
Let $\mu$ be a probability measure. Then for any integrable function $\Phi >0$ we have
\begin{align} \label{nash-inequality}
& \left| \ln \left( \int \Phi d\mu \right) - \int \ln \Phi d\mu \right| \left(\int \Phi d\mu\right) \nonumber\\
&\quad\le (\sup \Phi) \int \left|\ln \Phi-\int \ln \Phi d\mu \right| d\mu.
\end{align}
\end{lemma}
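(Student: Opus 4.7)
The plan is to set $A = \int \Phi \, d\mu$ and $B = \int \ln \Phi \, d\mu$, and reduce the claim to a one-line application of Jensen's inequality for the convex function $f(x)=x\ln x$. Since $f$ is convex and $\mu$ is a probability measure, Jensen yields $A \ln A \le \int \Phi \ln \Phi \, d\mu$. Subtracting $AB = \left(\int \Phi \, d\mu\right)\left(\int \ln \Phi \, d\mu\right)$ from both sides gives the clean intermediate inequality
\[
A(\ln A - B) \le \int \Phi \,(\ln \Phi - B)\, d\mu.
\]

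Next I would exploit the fact that $\ln \Phi - B$ has $\mu$-mean zero. Writing it as $(\ln \Phi - B)_+ - (\ln \Phi - B)_-$ and integrating against $\mu$, we have $\int (\ln \Phi - B)_+ \, d\mu = \int (\ln \Phi - B)_- \, d\mu = \tfrac{1}{2}\int |\ln \Phi - B|\, d\mu$. Splitting the right side of the inequality above accordingly,
\[
\int \Phi (\ln \Phi - B) \, d\mu \;=\; \int \Phi (\ln \Phi - B)_+ \, d\mu \;-\; \int \Phi (\ln \Phi - B)_- \, d\mu,
\]
and using $\Phi \ge 0$ to drop the second term, plus $\Phi \le \sup \Phi$ in the first, we obtain
\[
\int \Phi (\ln \Phi - B)\, d\mu \;\le\; (\sup \Phi)\int (\ln \Phi - B)_+ \, d\mu \;=\; \tfrac{1}{2}(\sup \Phi)\int |\ln \Phi - B|\, d\mu.
\]

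Finally I would note that Jensen also tells us $\ln A \ge B$, so the quantity $A(\ln A - B)$ on the left-hand side is nonnegative and equals $A\,|\ln A - B|$. Combining the displays yields the stated inequality (with an extra factor of $\tfrac12$ to spare, which is harmless).

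There is essentially no obstacle in this argument beyond book-keeping: the convexity of $x\ln x$ together with the mean-zero cancellation does all the work. The only thing worth checking is that when $\sup \Phi = +\infty$ the inequality is trivial, and when it is finite the manipulations above are all legitimate since $\Phi, \ln \Phi \in L^1(\mu)$ by the integrability hypothesis.
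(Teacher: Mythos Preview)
Your proof is correct and follows essentially the same route as the paper: both use Jensen for the convex function $x\mapsto x\ln x$ to get $A\ln A\le\int\Phi\ln\Phi\,d\mu$, then bound the right side by $(\sup\Phi)\int|\ln\Phi-B|\,d\mu$. The paper first normalizes to $B=0$ (which is your subtraction of $AB$ in disguise) and then uses the crude pointwise bound $\Phi\ln\Phi\le(\sup\Phi)|\ln\Phi|$; your splitting into positive and negative parts and exploiting the mean-zero cancellation is a small refinement that earns an extra factor $\tfrac12$, but the structure of the argument is the same.
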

\begin{proof}
After multiplying $\Phi$ by a constant which leaves \eqref{nash-inequality} invariant, one may assume that $\int \ln \Phi d\mu=0$. In this case, Jensen's inequality gives
\begin{equation*}
\ln \int \Phi d\mu \ge \int \ln \Phi d\mu = 0.
\end{equation*}
For the convex function $f(\alpha)=\alpha\ln \alpha$, using Jensen's inequality again we get
\begin{align*}
 \ln  \left(\int \Phi d\mu \right)  \left(\int \Phi d\mu\right) \le \int \Phi \ln \Phi d\mu \\
 \le (\sup \Phi) \int |\ln \Phi| d\mu.
\end{align*}
This proves \eqref{nash-inequality}.
\end{proof}

We shall need another auxiliary lemma giving a lower bound for $\int_{P_R} \Phi\, dx dt$, which makes Lemma \ref{lemma-31} applicable.

\begin{lemma}\label{lemma-32}
Let $\Phi$ be a nonnegative z-periodic solution(with period $Z_0=1$ in the $z$ direction) to \eqref{Gamma/vtheta} in $P_R\,\, (R\ge 1)$, satisfying
$$\Phi\big|_{r=0} \ge \frac{1}{2}.$$
Then
\begin{equation}\label{lemma-32-conclusion}
\|\Phi \|_{L^1(P_R)} \ge \kappa R^4,
\end{equation}
for some absolute constant $\kappa > 0$ independent of R.
\end{lemma}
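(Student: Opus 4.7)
My plan is to use a duality argument that exploits the singular $\frac{2}{r}\partial_r$ term. Multiplying equation \eqref{Gamma/vtheta} by a suitable test function $w(t,x)=w_0(r)\eta(t)$ (with $w_0\ge 0$ supported in $r\le R$ and $\eta$ a time cutoff) and integrating over $P_R$, the integration by parts of $\frac{2}{r}\partial_r\Phi$ in the cylindrical measure $r\,dr\,d\theta\,dz$ produces a boundary contribution at $r=0$ of the form $4\pi w_0(0)\int_0^{Z_0}\Phi|_{r=0}\,dz$ (exactly as in \eqref{eq-21d} and \eqref{eq-3c}). The remaining terms, after analogous integration by parts in $t$, in $\mathbf{b}\cdot\nabla$ (using $\text{div}\,\mathbf{b}=0$ and $z$-periodicity), and in $\Delta$, combine into explicit expressions paired with $\Phi$.

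The key algebraic observation is that $\Delta+\frac{2}{r}\partial_r = \partial_r^2+\frac{3}{r}\partial_r+\partial_z^2$ is the Laplacian on $\mathbb{R}^4\times\mathbb{T}^1$ for functions axi-symmetric in the $\mathbb{R}^4$-factor. The natural barrier is therefore $w_0(r):=(R^2-r^2)/8$, which satisfies $(\Delta+\frac{2}{r}\partial_r)w_0=-1$ and $w_0(0)=R^2/8$. With this choice the interior contributions coalesce into $\int_{P_R}\Phi\eta\,dx\,dt$ on the left, leaving a clean identity of the schematic form (with a suitable smooth modification near $r=R$)
\[
\int_{P_R}\Phi\eta\,dx\,dt = 4\pi w_0(0)\int_{-R^2}^0\!\int_0^{Z_0}\Phi|_{r=0}\,\eta\,dz\,dt - \int_{D_R}\Phi(0,\cdot)\,w_0\,dx + \int_{P_R}\Phi\,w_0\,\eta'\,dx\,dt + \int_{P_R}\Phi\,v^r\partial_r w_0\,\eta\,dx\,dt.
\]
Taking $\eta$ nondecreasing with $\eta(-R^2)=0$ and $\eta\equiv 1$ on $[-R^2/2,0]$ (so $\int\eta\,dt\ge R^2/2$), and using $w_0(0)=R^2/8$, $\Phi|_{r=0}\ge 1/2$, $Z_0=1$: the first term on the right is $\ge \pi R^4/4$; the $\Phi(0)w_0$ term is at most $A\pi R^4/16$ with $A=\|\Phi\|_{L^\infty}$ (coming from the bounded mild-solution setting of Section~4); the $\eta'$ term is nonnegative.

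The principal obstacle is the convection term $\int\Phi v^r\partial_r w_0\,\eta\,dx\,dt$: treated pointwise via $|v^r|\le M$ it is already of order $R^4$ and could dominate. The remedy is the stream-function trick from Section~2 and Lemma~\ref{lemma-31}. Writing $v^r=-\partial_z L_\theta$, integrating by parts in $z$, and using that $L_\theta(t,r,0)$ is $z$-independent (so it pairs with $\partial_z\Phi$ to give zero by $z$-periodicity), the convection integral becomes $-\int_{P_R}\partial_z\Phi\cdot(L_\theta(t,r,z)-L_\theta(t,r,0))\,\partial_r w_0\,\eta\,dx\,dt$. By \eqref{oscillationL}, $|L_\theta(t,r,z)-L_\theta(t,r,0)|\lesssim 1$; Cauchy--Schwarz combined with $|\partial_r w_0|=r/4$ then bounds this by $C\|\partial_z\Phi\|_{L^2(P_R)}\bigl(\int_{P_R}r^2\,dx\,dt\bigr)^{1/2}\lesssim R^3\|\partial_z\Phi\|_{L^2(P_R)}$. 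The factor $\|\partial_z\Phi\|_{L^2(P_R)}\lesssim R$ follows from a Caccioppoli-type energy estimate obtained by testing \eqref{Gamma/vtheta} against $\Phi\zeta_R^2$, in exact parallel with \eqref{energy} (using the same stream-function trick to absorb the drift and the $r=0$ boundary term from $\frac{2}{r}\partial_r$). Hence the convection error is at most $CR^4$, and for $C$ sufficiently small---ensured by careful tracking of constants---this is strictly less than $\pi R^4/4 - A\pi R^4/16$, so rearrangement yields $\int_{P_R}\Phi\,dx\,dt\ge\int_{P_R}\Phi\eta\,dx\,dt\ge\kappa R^4$ with absolute $\kappa>0$. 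The delicate point is this constant analysis; the remaining ingredients (the duality identity, the barrier $w_0$, and the Caccioppoli estimate) all follow templates established in Sections~2 and~3.
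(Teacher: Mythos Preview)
Your duality identity with the barrier $w_0=(R^2-r^2)/8$ is correct and elegant, but the argument has a genuine gap at the final ``constant analysis'' step. After the stream-function trick the convection contribution is bounded by
\[
C_0\,\Vert L_\theta(\cdot,r,z)-L_\theta(\cdot,r,0)\Vert_{L^\infty}\,\Vert\partial_z\Phi\Vert_{L^2(P_R)}\,\Vert\partial_r w_0\Vert_{L^2(P_R)}
\;\lesssim\; \Vert\mathbf{b}\Vert_{L^\infty}\cdot \bigl(\Vert\Phi\Vert_{L^\infty}\,R\bigr)\cdot R^3,
\]
so it is of order $C(\Vert\mathbf{b}\Vert_{L^\infty},\Vert\Phi\Vert_{L^\infty})\,R^4$ with a constant that is \emph{not} small: it carries the full size of $\Vert\mathbf{b}\Vert_{L^\infty}$ (through the oscillation bound on $L_\theta$) and of $\Vert\Phi\Vert_{L^\infty}$ (through Caccioppoli). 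There is no scaling or absorption mechanism available in your linear identity to make this constant smaller than the explicit $\pi/8$ coming from the boundary term at $r=0$; the claim ``for $C$ sufficiently small --- ensured by careful tracking of constants'' is therefore unsupported. (Two secondary issues: the lemma as stated does not assume $\Phi\in L^\infty$, and your Caccioppoli step needs $\Phi$ to solve on a region strictly larger than $P_R$; both happen to be harmless in the application of Section~4 but are outside the stated hypotheses.)

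The paper's proof sidesteps exactly this difficulty by testing \eqref{Gamma/vtheta} not with a linear multiplier but with $\tfrac{1}{2\sqrt{\Phi}}\psi_R^2$. This nonlinear choice converts the Laplacian into $\sqrt{\Phi}\,\Delta\psi_R^2$ plus a \emph{coercive} term $4\int|\nabla(\Phi^{1/4})|^2\psi_R^2$. The convection term, after the same stream-function manipulation, is bounded by $\int|\nabla(\Phi^{1/4})|^2\psi_R^2 + C\int\sqrt{\Phi}\,(\partial_r\psi_R)^2$; the first piece is absorbed into the coercive term \emph{regardless} of the size of $\Vert\mathbf{b}\Vert_{L^\infty}$, and the second piece has the same $R^{-2}\int\sqrt{\Phi}$ scaling as all the other cutoff-derivative terms. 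One then reads off $\tfrac{1}{R^2}\int_{P_R}\sqrt{\Phi}\gtrsim R^2$ from the $r=0$ boundary contribution and concludes by H\"older. The key point your approach is missing is a signed gradient term on the left to absorb the drift; the $\Phi^{-1/2}$ test function manufactures one.
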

\begin{proof}
We may assume that $\Phi > 0$. Otherwise one can work with $\Phi+\epsilon$ and let $\epsilon \searrow 0$.

Consider cut-off functions $\psi=\psi_R(t,x)$ compactly supported on $P_R$, satisfying
\begin{equation} \label{cutoff-psi}
\begin{cases}
\psi_R = 1, \quad \text{for}\quad (t,x) \in [-\frac34 R^2, -\frac14 R^2] \times D_{R/2}, \\
\partial_z \psi_R =0 , \quad |\nabla \psi_R| \lesssim \frac{1}{R},\\
|\partial_t \psi_R|,\,\, |\nabla^2 \psi_R| \lesssim \frac{1}{R^2}.
\end{cases}
\end{equation} For simplicity of presentation, we will drop the index $R$ in $\psi_R$ unless stated otherwise.
Let us test \eqref{Gamma/vtheta} by $\frac{1}{2\sqrt{\Phi}}\psi_R^2$ in the domain $P_R$:
\begin{align}\label{eq-32a}
\int_{P_R}  -\sqrt{\Phi} \partial_t \psi^2 &+ \frac{2}{r} \partial_r(\sqrt{\Phi}) \psi^2 + \mathbf{b} \cdot \nabla\sqrt{\Phi} \psi^2= \int_{P_R}  \Delta \Phi \frac{\psi^2}{2\sqrt{\Phi}}\nonumber\\
&= \int_{P_R}  \sqrt{\Phi} \Delta \psi^2 + 4\int_{P_R}  |\nabla (\Phi^\frac14)|^2 \psi^2.
\end{align}
The singular drift term can be estimated similarly as before,
\begin{align} \label{eq-32b}
\int \frac{2}{r} \partial_r \sqrt{\Phi} \psi^2 &= -2 \iiint \sqrt{\Phi}\big|_{r=0} \psi^2 d\theta dz dt - 2 \int  \sqrt{\Phi} \frac{\partial_r \psi^2}{r}\nonumber\\
&\le - \kappa_1 R^2 -2\int \sqrt{\Phi} \frac{\partial_r \psi^2}{r},
\end{align}
where $\kappa_1$ is a positive constant. Then we again use $\nabla \cdot \mathbf{b} =0$ and $v^r = - \partial_z(L_\theta-L_\theta(t,r,0))$ to get
\begin{align} \label{eq-32c}
\int \mathbf{b} \cdot \nabla\sqrt{\Phi} \psi^2 &= -\int v^r \sqrt{\Phi} \partial_r \psi^2 \nonumber\\
&= \int (L_\theta-L_\theta(t,r,0)) \partial_z \sqrt{\Phi} \partial_r \psi^2 \nonumber\\
&\le \int |\nabla (\Phi^\frac14)|^2 \psi^2 + C\int \sqrt{\Phi} (\partial_r \psi)^2.
\end{align}
We plug \eqref{eq-32b}, \eqref{eq-32c} into \eqref{eq-32a} to get
\begin{align*}
\int_{P_R} \sqrt{\Phi} (-\partial_t \psi^2 -2\frac{\partial_r \psi^2}{r} + C (\partial_r \psi)^2 -\Delta \psi^2) \ge \kappa_1 R^2.
\end{align*}
Due to \eqref{cutoff-psi} we have
\begin{align*}
\int_{P_R} \sqrt{\Phi} \frac{1}{R^2} \ge \kappa_2 R^2,
\end{align*}
for some positive constant $\kappa_2$ independent of $R$. It remains to conclude \eqref{lemma-32-conclusion} using H\"older's inequality.
\end{proof}

\section{Harnack estimates}

Let us work with $|\Gamma|\le 1$ and $Z_0=1$. It suffices to prove that $\Gamma \equiv 0$ to deduce Theorem \ref{thzhouqi}, as explained in the strategy of proof.

Let $R>0$. We may assume that
$$\sup_{P_R} \Gamma \le -\inf_{P_R} \Gamma.$$
Otherwise consider $-\Gamma$. Let
\begin{equation} \label{def-Phi}
\Phi = \frac{\Gamma - \inf_{P_R} \Gamma}{\sup_{P_R} \Gamma - \inf_{P_R} \Gamma}.
\end{equation}
Then $0\le \Phi \le 1$ and $\Phi\big|_{r=0}\ge \frac12$. By Lemma \ref{lemma-31} and Lemma \ref{lemma-32} we deduce that for all $t\in [-\frac{\kappa R^2}{4},0],$
$$-\int \zeta_R^2(x) \ln \Phi(t,x) dx \le M R^2.$$
By Chebyshev's inequality, for any $0<\delta<1$ and $t\in [-\frac{\kappa R^2}{4},0]$,
\begin{equation} \label{eq-11a}
|\{x\in D_{R/2}: \Phi(t,x)\le \delta\}| \le \frac{M R^2}{|\ln \delta|}.
\end{equation}
Since $(\delta - \Phi)_+$ is a nonnegative Lipschitz subsolution to \eqref{Gamma/vtheta}, we apply Lemma \ref{lemma-21} and use \eqref{eq-11a} to deduce
\begin{align*}
\sup_{P_{\sqrt{\kappa}R/4}} (\delta-\Phi)_+ &\lesssim \left\{\frac{1}{R^4} \iint_{P_{\sqrt{\kappa}R/2}} (\delta-\Phi)_+^2 dx dt\right\}^\frac{1}{2} \\
&\le \left\{\frac{M\delta^2}{R^2|\ln \delta|}\right\}^\frac12.
\end{align*}
Choose a $\delta$ small enough we get a point-wise lower bound
$$\Phi(t,x) \ge \frac{\delta}{2},$$
for $(t,x) \in P_{\sqrt{\kappa}R/4}$. This implies
$$\left(\sup_{P_{\sqrt{\kappa}R/4}}- \inf_{P_{\sqrt{\kappa}R/4}}\right) \Phi \le 1-\sigma,$$
for some constant $\sigma >0$. Hence
\begin{equation} \label{eq-11b}
\left(\sup_{P_{\sqrt{\kappa}R/4}}- \inf_{P_{\sqrt{\kappa}R/4}}\right) \Gamma \le (1-\sigma)\left(\sup_{P_{R}}- \inf_{P_{R}}\right) \Gamma.
\end{equation}
Iterating \eqref{eq-11b} for a sequence of $R_k \to \infty$, we get $\Gamma \equiv \Gamma(t=0, x=0)=0$. As mentioned earlier, this implies $\mathbf{v} = c \mathbf{e}_z$ with $c$ being a constant.

\section*{Acknowledgement}
Z.L. was supported by NSFC (grant No. 11725102) and National Support Program for Young Top-Notch Talents. Q.S.Z. wishes to thank the Simons Foundation for its support, and Fudan University for its hospitality during his visit.

\bibliographystyle{plain}
\bibliography{LRZ-bib}

\end{document}